\newtheorem{theo}{Theorem}[section]
\newtheorem{defi}[theo]{Definition}
\newtheorem{prop}[theo]{Proposition}
\numberwithin{equation}{section}
\def\pre-tr{\operatorname{pre-tr}}
\newcommand{\bbA}{{\mathbb A}}
\newcommand{\bbC}{{\mathbb C}}
\newcommand{\bbL}{{\mathbb L}}
\newcommand{\bbP}{{\mathbb P}}
\newcommand{\bbF}{{\mathbb F}}
\newcommand{\cV}{{\mathcal V}}
\newcommand{\cH}{{\mathcal H}}
\newcommand{\Symm}{\operatorname{Sym}}
\newcommand{\sign}{\operatorname{sign}}
\author{Michael Larsen \and Valery A.~Lunts}
\address{
  Department of Mathematics\\
  Indiana University\\
  Rawles Hall\\
  831 East 3rd Street\\
  Bloomington, IN 47405\\
  USA
}
\email{mjlarsen@indiana.edu}
\email{vlunts@indiana.edu}
\thanks{}
\title[Rationality of motivic zeta function and cut-and-paste problem]
{Rationality of motivic zeta function and cut-and-paste problem}
\begin{document}

\begin{abstract} Assuming the positive solution to the Cut-and-paste
problem we prove that the motivic zeta function remains irrational
after inverting $\bbL$.
\end{abstract}

\maketitle

\thanks{}

\section{Introduction}
Fix a field $\bbF$ and let $K_0[\cV _{\bbF}]$ denote the Grothendieck ring of
varieties over $\bbF$. That is $K_0[\cV _{\bbF}]$ is the abelian group
which is generated by isomorphism classes of $\bbF$-varieties with relations
$$[X]=[Y]+[X\backslash Y]$$
if $Y\subset X$ is a closed subvariety. The product in $K_0[\cV _{\bbF}]$ is
defined as
$$[X]\cdot [Y]=[X\times _{\bbF}Y]$$
In \cite{LaLu1} we have asked the following question:

\medskip

\noindent{\it Cut-and-paste problem}.
Let $Z_1,...,Z_k;W_1,...W_l$ be $\bbF$-varieties and consider the disjoint unions
$X=\coprod Z_i$ and $Y=\coprod W_j$. Suppose that $[X]=[Y]$. Is it possible to decompose $X$ and $Y$ into locally closed
subvarieties
$$X=\coprod _{i=1}^k X_i,\quad Y=\coprod _{i=1}^k Y_i$$
such that for each $i$ the varieties $X_i$ and $Y_i$ are isomorphic?

\medskip

Some positive results for this problem are obtained in the paper \cite{LiSeb}.
They prove
that the solution to the problem is positive (in characteristic zero) if 1)
$\dim X\leq 1$, 2) $X$ is a smooth connected projective surface, 3) $X$ contains
only finite many rational curves.

In this note we want to relate the Cut-and-paste problem to the question of
rationality of the motivic zeta function
$$\zeta _X(t)=\sum _{n=0}^\infty [\Symm ^nX]t^n \in K_0[\cV _{\bbF}][[t]]$$

This motivic zeta function was introduced by Kapranov in \cite{Ka}, where
he proves that $\zeta _X(t)$ is rational if $\dim X\leq 1$. He also says that
it is natural to expect rationality of $\zeta _X(t)$ for any variety $X$.

This conjecture of Kapranov was disproved in \cite{LaLu1} and \cite{LaLu2},
where we show that the motivic zeta function of a surface $X$ is
rational if and only if $X$ has Kodaira dimension $-\infty$ (for $\bbF =\bbC$).
The proof of this uses a ring homomorphism $K_0(\cV _{\bbC})\to \cH$ to a
field $\cH$ which factors through the quotient $K_0(\cV _{\bbC})/{\bbL}$, where
$\bbL =[\bbA ^1]$. Hence the question of rationality of the motivic
zeta function in the localized ring $K_0[\cV _{\bbF}][\bbL ^{-1}]$ is
still open.

In the paper \cite{DeLoe} the authors conjecture (Conjecture 7.5.1)
that $\zeta _X(t)$ {\it is} rational in
$K_0[\cV _{\bbF}][\bbL ^{-1}]$.

In this article we prove that the positive
solution to the Cut-and-paste problem implies that $\zeta _X(t)$ is
{\it not} rational in $K_0[\cV _{\bbF}][\bbL ^{-1}]$. This follows easily from
our results in \cite{LaLu1}.

We thank Ravi Vakil, whose beautiful recent lecture in Indiana University
on motivic Grothendieck ring prompted us to think again about the subject.

\section{Rationality of power series with coefficients in a ring}

Let $A$ be a commutative ring with 1. We recall and compare various notions
of rationality of power series with coefficients in $A$.

\begin{defi} \label{def1} A power series $f(t)\in A[[t]]$ is {\bf globally rational} if and
only if there exist polynomials $g(t), h(t)\in A[t]$ such that $f(t)$ is the unique
solution of $g(t)x = h(t)$.
\end{defi}

\begin{defi}  \label{def2} A power series $f(t)=\sum _{i=0}^\infty a_it^i \in A[[t]]$ is {\bf determinantally
rational} if and only if there exist integers $m$ and $n$ such that
$$\det \left( \begin{array}{cccc}
a_i & a_{i+1} & ... & a_{i+m} \\
a_{i+1} & a_{i+2} & ... & a_{i+m+1}\\
\vdots & \vdots & \ddots & \vdots \\
a_{i+m} & a_{i+m+1} & ... & a_{i+2m}
\end{array} \right)=0$$
for all $i>n$.
\end{defi}

It is classical that the Definition \ref{def1} is equivalent to Definition
\ref{def2} if $A$ is a field.

\begin{defi} \label{def3} A power series $f(t)\in A[[t]]$ is {\bf pointwise rational}
if and only if for all homomorphisms $\Phi $ from $A$ to a field, $\Phi (f)$ is
rational by either of the two previous definitions.
\end{defi}

These definitions are related by the following proposition \cite{LaLu2}, Prop. 2.4:

\begin{prop} Any globally rational power series is determinantally rational,
and any determinantally rational power series is pointwise rational. Neither
converse holds for a general coefficient ring $A$. All three conditions
are equivalent when $A$ is an integral domain.
\end{prop}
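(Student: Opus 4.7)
The approach has three independent steps; the main difficulty is the first, while the remaining two are relatively direct.

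First, for \emph{globally rational implies determinantally rational}, write $g(t)=\sum_{j=0}^{d}b_j t^j$; the equation $g(t)f(t)=h(t)$ gives the recursion $\sum_{j=0}^{d}b_j a_{n-j}=0$ for $n>\deg h$. Taking $m=d$, this is exactly the statement that the column vector $v=(b_d,b_{d-1},\ldots,b_0)^T$ annihilates the Hankel matrix $M_i=(a_{i+j+k})_{0\le j,k\le m}$ for $i$ larger than $\deg h-d$. The adjugate identity then yields $\det(M_i)\cdot v=0$, so $\det(M_i)\cdot g(t)=0$ in $A[[t]]$. At this point one has to exploit the uniqueness clause of Definition~\ref{def1}: if $g(t)y=0$ for some $y\ne 0$, then $f+y$ would be a second solution to $g(t)x=h(t)$, so $g(t)$ must be a nonzerodivisor in $A[[t]]$, which forces $\det(M_i)=0$. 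I expect this to be the main obstacle --- without using uniqueness one is stuck at $\det(M_i)\,g(t)=0$, which does not imply the vanishing of the determinant over a general coefficient ring.

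Second, for \emph{determinantally rational implies pointwise rational}: any ring homomorphism $\Phi\colon A\to K$ into a field satisfies $\Phi(\det M_i)=\det(\Phi M_i)$, so the vanishing of the Hankel determinants transfers from $A$ to $K$; the classical equivalence over a field (noted in the paper just after Definition~\ref{def2}) then gives rationality of $\Phi(f)$. For the non-reversibility of each implication, I would simply cite the explicit power-series examples constructed in \cite{LaLu2}.

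Finally, for the \emph{integral-domain case}, let $K=\mathrm{Frac}(A)$ and use the embedding $\iota\colon A\hookrightarrow K$. If $f$ is pointwise rational, applying the definition to $\iota$ gives rationality over $K$; the Hankel determinants of $f$ are elements of $A$ which vanish in $K$, so by injectivity they vanish in $A$, showing $f$ is determinantally rational. Conversely, if $f$ is determinantally rational over $A$, the classical theorem provides $\tilde g,\tilde h\in K[t]$ with $\tilde g\ne 0$ and $\tilde g f=\tilde h$; clearing denominators produces $g,h\in A[t]$ with $gf=h$ and $g\ne 0$. Since $A$ is a domain so is $A[[t]]$, so $g$ is a nonzerodivisor there and the solution is automatically unique, giving global rationality.
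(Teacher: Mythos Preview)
The paper does not actually prove this proposition; it simply records the statement and cites \cite{LaLu2}, Prop.~2.4. So there is no in-paper argument to compare against.

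Your argument is correct. A few remarks on the first implication, which is indeed the only delicate point. From $g(t)f(t)=h(t)$ with $g(t)=\sum_{j=0}^{d}b_j t^j$ you correctly extract the recursion $\sum_{j}b_j a_{n-j}=0$ for $n>\deg h$, and with $m=d$ this gives $M_i v=0$ for $i>\deg h-d$, where $v=(b_d,\dots,b_0)^T$. The adjugate identity $\operatorname{adj}(M_i)\,M_i=\det(M_i)\,I$ then yields $\det(M_i)\,v=0$, hence $\det(M_i)\,g(t)=0$ in $A[t]\subset A[[t]]$. Your use of the uniqueness clause is exactly right and is the essential point over a general ring: if $g(t)y=0$ with $y\neq 0$ then $f+y$ would be a second solution, so $g$ is a nonzerodivisor in $A[[t]]$ and $\det(M_i)=0$ follows. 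Without invoking uniqueness one genuinely cannot conclude, since over a ring with zero divisors a matrix can kill a nonzero vector while having nonzero determinant.

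The second implication is immediate, as you say. For the integral-domain case your reduction to $K=\operatorname{Frac}(A)$ is the standard one: pointwise rationality applied to the inclusion $A\hookrightarrow K$ gives vanishing of the Hankel determinants in $K$, hence in $A$; and in the other direction the classical field result produces $g,h$ over $K$, denominators clear, and $A[[t]]$ being a domain forces uniqueness. Citing \cite{LaLu2} for the explicit counterexamples to the converses is entirely appropriate here, since that is precisely what the paper itself does for the whole proposition.
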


It is known that the ring $K_0[\cV _{\bbF}]$ has zero divisors \cite{Po}.

\section{Cut-and-paste problem and rationaly of $\zeta _X(t)$}

The following theorem was proved in \cite{LaLu2}, Thm. 7.6 and Cor. 3.8:

\begin{theo} Let $X$ be a complex surface of
Kodaira dimension $\geq 0$. Then the zeta function $\zeta _X(t)\in K_0[\cV _{\bbC}][[t]]$ is not pointwise rational.
\end{theo}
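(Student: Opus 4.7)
The plan is to exploit the ring homomorphism already signalled in the introduction: project $K_0[\cV_\bbC]$ onto $K_0[\cV_\bbC]/(\bbL)$, identify this quotient with the monoid ring $\bbZ[\mathrm{SB}]$, where $\mathrm{SB}$ is the commutative monoid of stable birational equivalence classes of smooth projective $\bbC$-varieties, and pass to its fraction field $\cH$. Since $\mathrm{SB}$ is a \emph{free} commutative monoid (by the Larsen--Lunts structure theorem used in \cite{LaLu1}), $\bbZ[\mathrm{SB}]$ is a polynomial ring in countably many indeterminates, so $\cH$ is a genuine field and the composite $\Phi\colon K_0[\cV_\bbC]\to\cH$ is a ring homomorphism. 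It then suffices to show that $\Phi(\zeta_X(t))\in\cH[[t]]$ is not rational.

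First I would compute the image $\Phi([\Symm^n X])$ for each $n$. Although $\Symm^n X$ is singular, the Hilbert--Chow morphism $X^{[n]}\to\Symm^n X$ is a resolution by a smooth projective variety whose fibers are products of punctual Hilbert schemes of a surface, and these are rational. Stratifying both sides by partition type and using the scissor relations, one shows inductively that $[\Symm^n X]-[X^{[n]}]\in(\bbL)$, hence $\Phi([\Symm^n X])=[X^{[n]}]_{\mathrm{sb}}=:T_n$, the stable birational class of the Hilbert scheme.

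The crux is then to show: if $\kappa(X)\geq 0$, then among the classes $T_0,T_1,T_2,\ldots$ infinitely many distinct generators of the free monoid $\mathrm{SB}$ appear. Here I would rely on a stable birational invariant that grows with $n$---for instance $h^{p,0}$ with $p\geq 1$ is a stable birational invariant of smooth projective varieties, and G\"ottsche's formula expresses the Hodge numbers of $X^{[n]}$ in terms of those of $X$. The hypothesis $\kappa(X)\geq 0$ guarantees some nonzero plurigenus of $X$, and this feeds into the G\"ottsche computation to force $h^{p,0}(X^{[n]})$ (for suitable $p$) to be unbounded in $n$, so infinitely many of the $T_n$ must be genuinely distinct generators.

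Granted this claim, non-rationality of $\sum_n T_n t^n$ in $\cH[[t]]$ is a formal consequence: a putative expression $P(t)/Q(t)$ with $Q(t)=\sum_{j=0}^d q_j t^j$, $q_0\neq 0$, forces a recurrence $\sum_j q_j T_{n-j}=0$ in $\cH$ for all $n\gg 0$, and after clearing denominators this becomes a nontrivial polynomial relation among infinitely many algebraically independent generators of $\bbZ[\mathrm{SB}]$, which is impossible. I expect the main obstacle to be the crux step: one has to select an invariant that genuinely separates the various $X^{[n]}$ rather than being washed out by their heavy symmetric structure, and to pin down precisely why $\kappa(X)\geq 0$ is the correct hypothesis for unboundedness of that invariant.
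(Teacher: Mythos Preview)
The paper does not prove this theorem here; it cites \cite{LaLu2}, Thm.~7.6 and Cor.~3.8. Your outline is in the right spirit---project modulo $\bbL$ and exploit stable birational invariants---but it rests on a claim that is not available. The assertion that $\mathrm{SB}$ is a \emph{free} commutative monoid is not part of the theorem in \cite{LaLu1}: that result identifies $K_0[\cV_\bbC]/(\bbL)$ with the monoid ring $\bbZ[\mathrm{SB}]$, but whether $\mathrm{SB}$ enjoys unique factorization (equivalently, whether $\bbZ[\mathrm{SB}]$ is a polynomial ring over $\bbZ$) is not established there and, to my knowledge, is open. Without freeness you cannot form the fraction field $\cH$ as you describe, and the algebraic-independence step at the end collapses. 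Even granting freeness, the claim that infinitely many distinct \emph{generators} occur among the $T_n$ is not justified: unboundedness of $h^{p,0}(X^{[n]})$ shows only that the $T_n$ are pairwise distinct \emph{elements} of $\mathrm{SB}$, which says nothing about how they factor and is far from ruling out a linear recurrence over $\cH$.

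The route actually taken in \cite{LaLu1}, \cite{LaLu2} sidesteps these issues by composing with the further ring homomorphism $\bbZ[\mathrm{SB}]\to\bbZ[s]$, $[Y]_{\mathrm{sb}}\mapsto h_Y(s)=\sum_i h^{i,0}(Y)s^i$ (the invariant recalled just after Theorem~3.2 in this paper). The target is an honest integral domain, so one may pass to its fraction field and test rationality there. The substance of the proof is then a direct analysis of the explicit generating series $\sum_n h_{\Symm^n X}(s)\,t^n$, computed from Macdonald's formula, showing it is irrational over $\bbQ(s)$ precisely when the Hodge data of $X$ are rich enough; for surfaces this is controlled by $\kappa(X)\geq 0$. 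That analysis---a cousin of the Claim about $P_g$ invoked in the proof of Theorem~3.3---is where the real work lies, and it is more delicate than an unboundedness observation.
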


On the positive side it is relatively easy to prove the following
theorem \cite{LaLu2}, Thm. 3.9:

\begin{theo} If $X$ is a surface with the Kodaira dimension $-\infty$,
then the zeta function $\zeta _X(t)\in K_0[\cV _{\bbC}][[t]]$ is globally rational.
\end{theo}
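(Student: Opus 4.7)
The plan is to reduce to Kapranov's rationality theorem for smooth projective curves, via two ingredients: the effect of blow-ups on $\zeta$, and the key identity $\zeta_{Y\times\bbA^1}(t) = \zeta_Y(\bbL t)$.

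By the Enriques-Kodaira classification, $X$ is birational to a minimal ruled surface $\pi: X_0\to C$, a $\bbP^1$-bundle over a smooth projective curve $C$, and (by weak factorization / Castelnuovo) $X$ and $X_0$ are linked by a finite sequence of blow-ups at closed points. The scissor identity $\tilde Y = (Y\setminus\{p\})\sqcup E$ with $E\cong\bbP^1$, combined with $Y = (Y\setminus\{p\})\sqcup\{p\}$, gives
$$\zeta_{\tilde Y}(t) \,=\, \zeta_Y(t)\,\frac{\zeta_{\bbP^1}(t)}{\zeta_{\mathrm{pt}}(t)} \,=\, \frac{\zeta_Y(t)}{1-\bbL t},$$
so global rationality of $\zeta_X(t)$ is equivalent to that of $\zeta_{X_0}(t)$. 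Since rank-$2$ bundles on curves are Zariski-locally trivial, pick a dense open $U\subset C$ with $\pi^{-1}(U)\cong U\times\bbP^1$ and finite complement $\{c_1,\ldots,c_m\}$; then $X_0 = (U\times\bbP^1)\sqcup\bbP^1\sqcup\cdots\sqcup\bbP^1$ is a locally closed decomposition, and splitting $\bbP^1 = \bbA^1\sqcup\{\infty\}$ gives
$$\zeta_{X_0}(t) \,=\, \zeta_{U\times\bbA^1}(t)\,\zeta_U(t)\,\zeta_{\bbP^1}(t)^m.$$

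The crucial step is the identity $\zeta_{U\times\bbA^1}(t) = \zeta_U(\bbL t)$, equivalently $[\Sym^n(U\times\bbA^1)] = \bbL^n[\Sym^n U]$ in $K_0[\cV_\bbC]$ for every $n$. To prove it one considers the push-forward morphism $\Sym^n(U\times\bbA^1)\to\Sym^n U$ and stratifies the target by partition type $\lambda$ of the 0-cycle on $U$: the fiber over a cycle of type $\lambda = (m_1,\ldots,m_\ell)$ is canonically $\prod_i\Sym^{m_i}(\bbA^1)\cong\bbA^n$ via elementary symmetric functions, and over each stratum $S_\lambda$ the restriction is the rank-$n$ vector bundle obtained from the free étale $\Aut(\lambda)$-cover of $S_\lambda$ ordering the support, via the block-permutation action of $\Aut(\lambda)$ on $\bbA^n = \bbA^{m_1}\times\cdots\times\bbA^{m_\ell}$. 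A vector bundle is Zariski-locally trivial, so the stratum contributes $[S_\lambda]\bbL^n$; summing over $\lambda$ yields the identity. Kapranov's theorem for curves furnishes global rationality of $\zeta_U(t)$, whence of $\zeta_U(\bbL t)$, $\zeta_{\bbP^1}(t)^m$, their product $\zeta_{X_0}(t)$, and finally $\zeta_X(t)$. The main obstacle is the key identity — the partition-type stratification and the careful identification of $\Sym^n(U\times\bbA^1)\to\Sym^n U$ as a piecewise vector bundle require genuine verification; the remaining steps are formal consequences of scissor relations and Kapranov's theorem.
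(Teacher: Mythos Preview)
The paper itself does not prove this theorem; it is quoted from \cite{LaLu2}, Theorem~3.9. Your argument is correct and is essentially the one given there: reduce via minimal-model theory and the blow-up formula to a $\bbP^1$-bundle over a curve, split off the $\bbA^1$-factor, invoke the identity $[\Symm^n(Y\times\bbA^1)]=\bbL^n[\Symm^n Y]$ (whose proof by stratifying $\Symm^n Y$ according to partition type you sketch accurately), and finish with Kapranov's theorem for curves. Two minor caveats worth recording: you implicitly take $X$ smooth projective, which is what the Castelnuovo/minimal-model step requires; and the minimal model $\bbP^2$ is not literally a $\bbP^1$-bundle over a curve, though its affine cell decomposition together with the same identity disposes of it immediately.
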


Let $Y$ be a smooth projective variety of dimension $d$.
Recall that the polynomial
$$h_Y(s):=1+h^{1,0}(Y)s+h^{2,0}(Y)t^2+...+h^{d,0}(Y)s^d$$
is a birational invariant of $Y$ \cite{Hart}, Ch. II, Exercise 8.8. Here $h^{i,0}(Y)=\dim H^0(Y,\Omega ^i_Y)$.
Therefore we may (in characteristic zero) define $h_Z(t)$ for any variety $Z$, not necessarily smooth and projective, as
$$h_Z(s)=h_Y(s)$$
where $Y$ is any smooth projective model of $Z$.
The K\"{u}nneth formula for the Hodge structure on the cohomology of the constant sheaf $\bbC$ implies that $h_Y(s)$ is even a stable birational invariant of $Y$, i.e.
$$h_Y(s)=h_{Y\times {\bbP ^n}}(s)$$
The integer $P_g(Y):=h^{d,0}(Y)$ is the {\it geometric genus} of $Y$.

Here we prove the following theorem:

\begin{theo} Let $X$ be a complex surface with $P_g(X)\geq 2$. Assume that
the Cut-and-paste problem has a positive solution. Then the
zeta function $\zeta _X(t)\in K_0[\cV _{\bbC }][\bbL ^{-1}][[t]] $
is not determinantally rational.
\end{theo}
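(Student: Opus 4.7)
The plan is to argue by contradiction. Assume that $\zeta_X(t)$ is determinantally rational in $K_0[\cV_\bbC][\bbL^{-1}][[t]]$. By Definition~\ref{def2}, there exist integers $m,n$ such that the Hankel determinant
\[
H_i^{(m)}(X) := \det\bigl([\Symm^{\,i+j+k}X]\bigr)_{j,k=0}^{m}
\]
vanishes in $K_0[\cV_\bbC][\bbL^{-1}]$ for every $i>n$. Expanding the determinant and grouping by the sign of the permutation produces an equality $H_i^{(m)}(X)=[A_i]-[B_i]$ in $K_0[\cV_\bbC]$, where
\[
A_i=\coprod_{\sigma\in S_{m+1},\ \mathrm{sgn}(\sigma)=+1}\prod_{j=0}^{m}\Symm^{\,i+j+\sigma(j)}X,\qquad B_i=\coprod_{\sigma\in S_{m+1},\ \mathrm{sgn}(\sigma)=-1}\prod_{j=0}^{m}\Symm^{\,i+j+\sigma(j)}X
\]
are pure-dimensional disjoint unions of irreducible varieties of common dimension. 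Vanishing in the localization provides, for each $i>n$, an integer $N_i\geq 0$ with $[\bbA^{N_i}\times A_i]=[\bbA^{N_i}\times B_i]$ in $K_0[\cV_\bbC]$; the positive solution of the Cut-and-paste problem then promotes this to a piecewise isomorphism between the varieties $\bbA^{N_i}\times A_i$ and $\bbA^{N_i}\times B_i$.

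The next step is to extract from this piecewise isomorphism an equation in the monoid ring $\bbZ[\operatorname{SB}]$ of stable birational classes of smooth projective irreducible complex varieties that is insensitive to the $\bbA^{N_i}$-factor. For each pure-dimensional $\bbC$-variety $W$ with irreducible components $\{W_\alpha\}$, I set
\[
\nu(W):=\sum_\alpha [\overline{W_\alpha}]_{sb}\ \in\ \bbZ[\operatorname{SB}],
\]
where $\overline{W_\alpha}$ denotes any smooth projective birational model. The key property is that $\nu$ is invariant under piecewise isomorphism of pure-dimensional varieties: in any locally-closed irreducible stratification the top-dimensional strata are in natural bijection with the irreducible components (each component contains a unique open-dense top-dimensional stratum), and a piecewise isomorphism matches top-dimensional strata, and hence irreducible components, birationally. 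Moreover $\nu$ is multiplicative on products, and $\nu(\bbA^N\times W)=\nu(W)$ because $\bbP^N$ is stably birational to a point. Applying $\nu$ to the piecewise isomorphism cancels the $\bbA^{N_i}$-factor and yields $\nu(A_i)=\nu(B_i)$, which unpacks as
\[
\det\bigl([\Hilb^{\,i+j+k}X]_{sb}\bigr)_{j,k=0}^{m}=0\quad\text{in } \bbZ[\operatorname{SB}],\quad\text{for every } i>n,
\]
where we use $\Hilb^n X$ as the smooth projective birational model of $\Symm^n X$. Consequently $\sum_{n\geq 0}[\Hilb^n X]_{sb}\,t^n$ is determinantally rational in $\bbZ[\operatorname{SB}][[t]]$.

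The contradiction is then delivered by Theorem~3.1. Since $P_g(X)\geq 2$ forces the Kodaira dimension of $X$ to be at least $0$, Theorem~3.1 asserts that $\zeta_X(t)$ is not pointwise rational in $K_0[\cV_\bbC][[t]]$; its proof in \cite{LaLu2} exhibits a ring homomorphism $K_0[\cV_\bbC]\to\cH$ to a field $\cH$ that factors through $K_0[\cV_\bbC]/\bbL$ and under which $\zeta_X(t)$ becomes irrational. Identifying $K_0[\cV_\bbC]/\bbL$ with $\bbZ[\operatorname{SB}]$ via the Larsen--Lunts isomorphism and invoking the Hilbert--Chow congruence $[\Symm^n X]\equiv[\Hilb^n X]\pmod{\bbL}$ in $K_0[\cV_\bbC]$ (a standard consequence of Göttsche's motivic Hilbert scheme formula, because the exceptional fibers of the Hilbert--Chow morphism have classes divisible by $\bbL$), the image of $\zeta_X(t)$ under this ring homomorphism coincides with the image of $\sum_{n\geq 0}[\Hilb^n X]_{sb}\,t^n$. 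Therefore the latter series is not pointwise rational in $\bbZ[\operatorname{SB}][[t]]$, and a fortiori not determinantally rational, contradicting the conclusion of the previous paragraph.

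The main subtle point I anticipate is the careful setup of $\nu$: namely, verifying that irreducible components of pure-dimensional varieties are matched bijectively and birationally under piecewise isomorphism, and confirming that multiplication by $\bbA^N$ is transparent to $\nu$ (which is where the stable rationality of projective spaces enters). Once these geometric observations, together with the routine Hilbert--Chow identity modulo $\bbL$, are in place, the argument reduces the localized determinantal-rationality question cleanly to the already established non-pointwise-rationality statement of Theorem~3.1.
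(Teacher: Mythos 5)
Your proof is correct, and its first half coincides with the paper's: expand the Hankel determinant, separate the even and odd permutations into two pure-dimensional disjoint unions, multiply by $\bbL^{N}$ to land in $K_0[\cV_{\bbC}]$, apply Cut-and-paste, and use equidimensionality to match the irreducible components birationally in pairs (your careful justification that top-dimensional strata biject with irreducible components is exactly what the paper's terse phrase ``become pairwise birational'' is hiding). Where you diverge is in how the contradiction is extracted. The paper observes that the identity permutation contributes the product $X^{(m)}\times X^{(m+2)}\times\cdots\times X^{(m+2n)}$ exactly once and contradicts the explicit Claim from \cite{LaLu1}, p.~11, that for infinitely many $m$ no other permutation's product has the same geometric genus; this is precisely where the hypothesis $P_g(X)\geq 2$ is used. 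You instead record the full consequence $\nu(A_i)=\nu(B_i)$, i.e.\ vanishing of the Hankel determinants of $\bigl([\Hilb^{k}X]_{sb}\bigr)_k$ in $\bbZ[\operatorname{SB}]$, and then contradict Theorem~3.1 through its witnessing homomorphism to the field $\cH$, using that this homomorphism kills $\bbL$ together with the congruence $[\Symm^nX]\equiv[\Hilb^nX]\pmod{\bbL}$. This is more modular and, taken at face value, proves the conclusion for every surface of Kodaira dimension $\geq 0$ rather than only for $P_g\geq 2$; the price is that it relies on information internal to the proof of Theorem~3.1 (the factorization through $K_0[\cV_{\bbC}]/\bbL$, asserted only in the introduction here, not in the theorem statement) and on the G\"ottsche/Hilbert--Chow identity, which presumes $X$ smooth and (quasi-)projective. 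The paper's route is self-contained modulo the single cited Claim, which is exactly why its hypothesis is the stronger $P_g(X)\geq 2$.
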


\begin{proof} Put $X^{(n)}:=\Symm ^nX$. If the zeta function
$\zeta _X(t)\in K_0[\cV _{\bbC }][\bbL ^{-1}][[t]]$ is determinantally rational
then there exist integers $n>0$ and $n_0>0$ such that for each $m>n_0$ the
determinant
\begin{equation}\label{det=0} \det \left( \begin{array}{cccc}
X^{(m)} & X^{(m+1)} & ... & X^{(m+n)} \\
X^{(m+1)} & X^{(m+2)} & ... & X^{(m+n+1)}\\
\vdots & \vdots & \ddots & \vdots \\
X^{(m+n)} & X^{(m+n+1)} & ... & X^{(m+2n)}
\end{array} \right)
\end{equation}
equals zero in the ring $K_0[\cV _{\bbC }][\bbL ^{-1}]$.
This determinant is the sum
\begin{equation}\label{explicit}
\sum _{\sigma \in S_{n+1}}\sign (\sigma)X^{(m-1+\sigma (1))}\times
X^{(m+\sigma (2))}\times ...
\times X^{(m+n-1+\sigma (n+1))}
\end{equation}
The assumption that the determinant is zero in $K_0[\cV _{\bbC }][\bbL ^{-1}]$
means that the quantity \ref{explicit} when multiplied by
some power $\bbL ^N$ is zero in $K_0[\cV _{\bbC }]$. Then the positive solution
to the Cut-and-paste problem implies that the various products in
the alternating sum \ref{explicit} when multiplied by $\bbL ^N$ become pairwise  birational (since all of
them have the same dimension). Note that the product
$$X^{(m)}\times X^{(m+2)}\times ...\times X^{(m+2n)}$$
appears exactly once in \ref{explicit}. Now we get a contradiction with the
following claim, which is proved on p. 11 in \cite{LaLu1}:

\medskip

\noindent{\it Claim.} For infinitely many $m>0$ the equality
$$P_g(X^{(m)}\times ...\times X^{(m+2n)})=
P_g(X^{(m-1+\sigma (1))}\times ...\times X^{(m+n-1+\sigma (n+1))})$$
implies that $\sigma =1$.
\end{proof}


\begin{thebibliography}{ABC}

\bibitem[DeLoe]{DeLoe} J.~Denef, F.~Loeser, On some rational generating series occurring in arithmetic geometry, Geometric aspects of
Dwork theory. Vol. I, II, de Gruyter, Berlin, 2004, pp. 509-526.

\bibitem[Hart]{Hart} R.~Hartshorne, Algebraic geometry, Springer, 1977.

\bibitem[Ka]{Ka} M.~Kapranov, The elliptic curve in the S-duality theory and Eisenstein series for Kac-Moody groups,
arXiv:math.AG/0001005, 2000.

\bibitem[LaLu1]{LaLu1} M.~Larsen, V.~A.~Lunts, Motivic measures and stable
birational geometry, Mosc. Math. J. 3(1), 85-95 (2003)

\bibitem[LaLu2]{LaLu2} M.~Larsen, V.~A.~Lunts, Rationality criteria for motivic zeta functions, Compos. Math. 140 (2004), no. 6, 1537-1560.

\bibitem[LiSeb]{LiSeb} Q.~Liu, J.~Sebag, The Grothendieck ring of varieties and piecewise isomorphisms, Math. Z. 265
(2010), no. 2, 321-342.

\bibitem[Po]{Po} B.~Poonen, The Grothendieck ring of varieties is not a domain, Math. Res. Lett. 9 (2002), no. 4, 493-497.


\end{thebibliography}
\end{document}